\newcommand{\OrdinalOmega}{\omegaup}
\begin{document}

\maketitle

\footnotefirstpage

Given two sequences
\[
	X_0 \hookrightarrow \dots \hookrightarrow X_n
	\hookrightarrow \dots
	\qand
	Y_0 \hookrightarrow \dots \hookrightarrow
	Y_n \hookrightarrow \dots
\]
made of core-compact spaces with continuous injective
transition maps, we shall show that the
canonical continuous bijection
\[
	\textstyle
	\bigcup_{n < \OrdinalOmega} X_n \times Y_n
	\longrightarrow
	\left(\bigcup_{n < \OrdinalOmega} X_n\right) \times
	\left(\bigcup_{n < \OrdinalOmega} Y_n\right)
\]
is a homeomorphism.

By doing so, we shall sharpen a traditional tool used in the topology
of CW\=/complexes:
since their definition by Whitehead, one typical problem is
to show that a product of two CW-complexes is again a CW-complex. 
In 1956, Milnor showed that the product of two countable CW-complexes
is again a CW\=/complex
\cite{doi:10.2307/1969609}.
This result was then extended to products of locally countable
CW-complexes
\cite[5.2]{doi:10.1007/978-1-4684-6254-8}.
In 2015, someone on MathOverflow asked for
general commutation properties of sequential colimits
and fibre products.
Harpaz showed
that countable unions of closed embeddings between
separated and locally quasi-compact spaces commute with finite products
\cite{mathoverflow:215576}.
We extend Harpaz's result in several directions:
\begin{itemize}
	\item
		we remove the separation assumption;
	\item
		we replace local quasi-compactness
		with core-compactness;
	\item
		we remove the assumption that the transition maps
		be closed.
\end{itemize}

For the sake of the reader, we shall start by recalling the definition
and the elementary properties of core-compact spaces.

\section{Recollection on core-compact spaces}

The product with a topological space
does not always commute with colimits.
The class of spaces \( X \) for which the functor
\( Y \mapsto X \times Y \) commutes with all colimits was
characterised by Day and Kelly
\cite{doi:10.1017/s0305004100045850}.
These spaces were then called `core-compact' a few years after
\cite{doi:10.1090/s0002-9947-1978-0515540-7}.

\begin{definition}[(Way-below relation)]
	Let \( S, T \) be two subsets of a topological space
	\( X \).
	One writes \( S \ll T \) if for every open cover
	\( T \subset \cup_{i \in I} U_i \), there exists a finite
	subset \( J \subset I \) such that
	\( S \subset \cup_{i \in J} U_i \).
\end{definition}

\begin{definition}[(Core-compact space)]
	A topological space \( X \) is said to be core-compact if
	for every point \( x \in X \) and every open neighbourhood
	\( x \in U \), there exists an open
	neighbourhood \( V \) with \( x \in V \ll U \).
\end{definition}

The main example of core-compact spaces
are the locally quasi-compact ones:
if \( X \) is locally quasi-compact and \( x \in U \),
there is a quasi-compact
neighbourhood \( x \in K \subset U \), then letting \( V \) be the
interior of \( K \), one has \( x \in V \ll U \).
For sober spaces,
core-compactness coincides with local quasi-compactness
\cite[4.5]{doi:10.1090/s0002-9947-1978-0515540-7} and
there exists core-compact spaces which are not locally quasi-compact
\cite[\S 7]{doi:10.1090/s0002-9947-1978-0515540-7}.

We gather easy results regarding core-compact spaces and the
way-below relation that we shall use in the proof of the theorem:

\begin{itemize}
	\item
		a product of two core-compact spaces is core-compact;
	\item
		if \( S \ll T \) and \( T \subset Q \), then
		\( S \ll Q \);
	\item
		if \( S \ll T \) and \( R \subset S \), then
		\( R \ll T \);
	\item
		if \( X \) is core-compact and \( \mathfrak B(X) \) is
		a basis of opens of \( X \), then for every open
		\( W \subset X \), one has
		\[
			W = \bigcup_{\substack{U \ll W\\ U \in \mathfrak B(X)}} U
		\]
	\item
		if \( f \) is a continuous map
		and \( S \ll T \), then \( f(S) \ll f(T) \);
	\item
		if \( I \) is finite then
		\[
			\{S_i \ll T\}_{i \in I}
			\implies \cup_{i \in I} S_i \ll T
		\]
\end{itemize}

The following lemma is a variant of a theorem from Wallace
\cite[p. 142]{MR0070144}.

\begin{lemma}[(Product interpolation)]
	Let \( X \) and \( Y \) be two core-compact spaces.
	Let \( S \subset X \) and \( T \subset Y \) be subsets
	and let \( W \subset X \times Y \) be an open subset such that
	\( S \times T \ll W \).
	Then there exists
	\[
		S \times T \subset U_S \times V_T \ll W
	\]
	with \( U_S \) and \( V_T \)
	open neighbourhoods.
\end{lemma}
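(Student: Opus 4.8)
The plan is to run a Wallace\=/type tube argument: I use core-compactness of the product \( X \times Y \) to produce an initial cover of \( W \) by rectangles way-below \( W \), and then use the single hypothesis \( S \times T \ll W \) to collapse the whole situation to a finite one, from which a single rectangle can be assembled.

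First I would observe that \( X \times Y \) is core-compact (product of two core-compact spaces) and that the open rectangles \( U \times V \) form a basis of its topology. Applying the basis-decomposition property to the open set \( W \), I obtain a cover \( W = \bigcup_{i \in I} (U_i \times V_i) \) by open rectangles, each satisfying \( U_i \times V_i \ll W \). Since \( S \times T \ll W \), this open cover of \( W \) admits a finite subfamily indexed by some finite \( J \subset I \) with \( S \times T \subset \bigcup_{i \in J} (U_i \times V_i) \).

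Next comes the tube construction. For each \( s \in S \) I set \( J_s = \{\, i \in J : s \in U_i \,\} \). Given \( t \in T \), the point \( (s,t) \) lies in some \( U_i \times V_i \) with \( i \in J \), forcing \( i \in J_s \) and \( t \in V_i \); hence \( T \subset \bigcup_{i \in J_s} V_i \), while trivially \( s \in \bigcap_{i \in J_s} U_i \). The crucial observation, and what lets me avoid imposing a second compactness-type condition on \( S \) as in the classical Wallace theorem, is that \( J \) is finite, so as \( s \) ranges over \( S \) the index set \( J_s \) takes only finitely many values \( K_1, \dots, K_m \subset J \). I then define \( U_S = \bigcup_{\ell = 1}^m \bigcap_{i \in K_\ell} U_i \) and \( V_T = \bigcap_{\ell = 1}^m \bigcup_{i \in K_\ell} V_i \), both open since only finite unions and intersections occur, and with \( S \subset U_S \) and \( T \subset V_T \) by the previous paragraph.

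Finally I would verify the two required properties. The containment \( S \times T \subset U_S \times V_T \) is immediate. For the way-below relation, any point of \( U_S \times V_T \) lies in some \( \bigcap_{i \in K_\ell} U_i \times \bigcup_{i \in K_\ell} V_i \), and such a product is contained in \( \bigcup_{i \in K_\ell} (U_i \times V_i) \); therefore \( U_S \times V_T \subset \bigcup_{i \in J} (U_i \times V_i) \). As a finite union of rectangles each way-below \( W \), the right-hand side is way-below \( W \), and since any subset of a set way-below \( W \) is again way-below \( W \), I conclude \( U_S \times V_T \ll W \). The genuine obstacle is exactly the passage from the per-slice tubes to one honest rectangle over a possibly infinite \( S \); the finiteness of \( J \) is precisely what dissolves it.
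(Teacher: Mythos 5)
Your proof is correct, and it follows a genuinely different route from the paper's. The paper runs the classical two-stage tube argument: for each \( x \) it considers the family \( \Theta_x \) of rectangles \( U \times V \ll W \) with \( x \in U \), extracts a finite subfamily whose union contains \( \{x\} \times T \) to form a tube \( U_x \times V_x \ll W \), and then performs a second finite extraction in the \( X \)-direction, setting \( U_S = \cup_i U_{x_i} \) and \( V_T = \cap_i V_{x_i} \); both extractions are powered by pushing \( \ll \) through the projections, via \( T \ll q(W) \) and \( S \ll p(W) \). You instead perform a single extraction on the product, from the basis decomposition of \( W \) into rectangles way-below \( W \), and replace the second extraction by the finiteness of the power set of \( J \): grouping the points of \( S \) by their trace \( J_s \) assembles the rectangle. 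Besides being shorter, your route is more robust: the paper's two extractions rest on the asserted identities \( \cup_{\Theta_x} V = q(W) \) and \( p(W) = \cup_\Gamma U_x \), which as written can fail --- if \( W \) is a disjoint union of two open squares, then \( \cup_{\Theta_x} V \) is only the slice \( \{ y : (x,y) \in W \} \) and \( \cup_\Gamma U_x \) only covers those points \( x' \) with \( \{x'\} \times T \subset W \), both possibly strictly smaller than the corresponding projections --- so the finite extractions from \( T \ll q(W) \) and \( S \ll p(W) \) do not literally apply there; in your argument the family you extract from really is a cover of \( W \), and the hypothesis \( S \times T \ll W \) is used exactly once, where it applies verbatim. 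What the paper's formulation buys, once repaired, is a closer fit to Wallace's original tube lemma and tubes with a clearer geometric meaning; what yours buys is that no claim about projections or slices is ever needed. The only omissions on your side are the degenerate cases (\( S \), \( T \), or some \( K_\ell \) empty), trivial bookkeeping that the paper glosses over as well.
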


\begin{proof}
	Let \( x \in X \) and let \( \Theta_x \) be the set of opens
	\( U \times V \ll W \) with \( x \in U \).
	Let \( q \From X \times Y \to Y \) denote the projection on
	the second factor.
	Then \( \cup_{\Theta_x} V = q(W) \) and \( T \ll q(W) \).
	So there is a finite subset \( I_x \subset \Theta_x \) such that
	\( \{ x \} \times T \subset \cup_{I_x} U_i \times V_i
	\ll W \).
	Let \( U_x \coloneqq \cap_{I_x} U_i \) and
	let \( V_x \coloneqq \cup_{I_x} V_i \), then by construction
	\[
		\{ x \} \times T \subset U_x \times V_x \ll W
	\]

	Let \( p \From X \times Y \to X \) denote the projection on
	the first factor.
	And let \( \Gamma \) denote the collection
	of opens of the form \( U_x \times V_x \ll W \).
	One has
	\( S \ll p(W) \) and \( p(W) = \cup_\Gamma U_x \).
	So there is a finite subset \( I_S \subset \Gamma \) such that
	\( S \times T \subset \cup_{I_S} U_{x_i} \times V_{x_i} \ll W \).
	Let \( U_S \coloneqq \cup_{I_S} U_{x_i} \)
	and let \( V_T \coloneqq \cap_{I_S} V_{x_i} \), then
	by construction
	\[
		S \times T \subset U_S \times V_T \ll W
	\]
\end{proof}

\section{Theorem}

\begin{remark}[(Ascending chains of open neighbourhoods)]
	By defintion of the topology of a union,
	a subset \( U \subset \cup_{n < \OrdinalOmega} X_n \)
	is open if and only
	if \( U \cap X_n \subset X_n \) is open
	for every \( n < \OrdinalOmega \).

	Then one can build an open set of the union using
	an ascending chain of open neighbourhoods:
	find a sequence of open
	subsets \( U_n \subset X_n \) such that
	\( U_n \subset U_{n+1} \)
	for every \( n < \OrdinalOmega \).
	Then
	\[
		\textstyle
		U \coloneqq \bigcup_{n < \OrdinalOmega} U_n
	\]
	is open in \( \cup_{n < \OrdinalOmega} X_n \).
	Indeed, for every \( p < \OrdinalOmega \),
	\( U \cap X_p
	= \cup_{p \leq n < \OrdinalOmega} U_n \cap X_p \)
	is a union of open subsets of \( X_p \).
\end{remark}

\begin{theorem}
	Let
	\[
		X_0 \hookrightarrow \dots \hookrightarrow X_n
		\hookrightarrow \dots
		\qand
		Y_0 \hookrightarrow \dots \hookrightarrow
		Y_n \hookrightarrow \dots
	\]
	be two sequences of core-compact topological
	spaces with continuous injective transition maps,
	then the canonical continuous bijection
	\[
		\textstyle
		\bigcup_{n < \OrdinalOmega} X_n \times Y_n
		\longrightarrow
		\left(\bigcup_{n < \OrdinalOmega} X_n\right) \times
		\left(\bigcup_{n < \OrdinalOmega} Y_n\right)
	\]
	is a homeomorphism.
\end{theorem}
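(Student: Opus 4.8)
The canonical map is a continuous bijection, so it suffices to prove it is open; equivalently, that every subset \( W \subset \bigcup_{n < \OrdinalOmega} X_n \times Y_n \) which is open for the colimit topology --- that is, with each \( W_n \coloneqq W \cap (X_n \times Y_n) \) open in \( X_n \times Y_n \) --- is open for the product topology on \( \left(\bigcup_n X_n\right) \times \left(\bigcup_n Y_n\right) \). Fixing \( (x,y) \in W \), the plan is to produce open neighbourhoods \( x \in U \subset \bigcup_n X_n \) and \( y \in V \subset \bigcup_n Y_n \) with \( U \times V \subset W \). Following the Remark, I would build \( U \) and \( V \) as ascending unions \( U = \bigcup_n U_n \) and \( V = \bigcup_n V_n \) of opens \( U_n \subset X_n \), \( V_n \subset Y_n \) with \( U_n \subset U_{n+1} \) and \( V_n \subset V_{n+1} \). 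The point of ascending chains is that \( U \times V \subset W \) then reduces to the level-wise conditions \( U_n \times V_n \subset W_n \): any \( (a,b) \in U \times V \) lies in some \( U_m \times V_k \), and taking \( \ell = \max(m,k) \), monotonicity gives \( (a,b) \in U_\ell \times V_\ell \subset W_\ell \subset W \).

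The construction proceeds by induction on \( n \), and the key device is to carry a way-below condition rather than mere containment, so as to leave room to expand at each stage. Choosing \( n \) with \( x \in X_n \) and \( y \in Y_n \), core-compactness of the product \( X_n \times Y_n \) yields an open \( O \) with \( (x,y) \in O \ll W_n \), hence \( \{(x,y)\} \ll W_n \); the product interpolation lemma, applied with \( S = \{x\} \) and \( T = \{y\} \), then produces opens \( (x,y) \in U_n \times V_n \ll W_n \). For the inductive step, suppose \( U_m \times V_m \ll W_m \) in \( X_m \times Y_m \) with \( (x,y) \in U_m \times V_m \). Since the transition map \( f \From X_m \times Y_m \to X_{m+1} \times Y_{m+1} \) is continuous, the property \( f(S) \ll f(T) \) transports this to \( U_m \times V_m \ll W_m \) in \( X_{m+1} \times Y_{m+1} \); because \( W_m \subset W_{m+1} \), monotonicity upgrades it to \( U_m \times V_m \ll W_{m+1} \). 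Applying the interpolation lemma in \( X_{m+1} \times Y_{m+1} \) to \( S = U_m \), \( T = V_m \), \( W = W_{m+1} \) gives opens with \( U_m \times V_m \subset U_{m+1} \times V_{m+1} \ll W_{m+1} \); as \( U_m, V_m \) are nonempty this forces \( U_m \subset U_{m+1} \) and \( V_m \subset V_{m+1} \), closing the induction (setting \( U_k = V_k = \emptyset \) for \( k < n \)).

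The delicate point, and the reason the hypotheses are exactly what they are, is this inductive step: the transition maps are only assumed continuous and injective, not topological embeddings, so one cannot naively pass to subspace topologies. What rescues the argument is that the way-below relation need only be \emph{pushed forward} along a continuous map --- precisely the bulleted property \( S \ll T \implies f(S) \ll f(T) \) --- which costs nothing beyond continuity, while core-compactness of each \( X_n \times Y_n \) (a product of core-compacts) is what powers the interpolation lemma and hence the existence of the expanding rectangles. Once the induction is in place, openness of \( U \) and \( V \) follows from the Remark on ascending chains, one has \( (x,y) \in U_n \times V_n \subset U \times V \), and \( U_\ell \times V_\ell \ll W_\ell \) gives \( U_\ell \times V_\ell \subset W_\ell \) by covering \( W_\ell \) with itself, so that \( U \times V \subset W \). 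As \( (x,y) \in W \) was arbitrary, \( W \) is open for the product topology, and the canonical bijection is a homeomorphism.
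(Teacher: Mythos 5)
Your proof is correct and follows essentially the same route as the paper's: an ascending chain of open rectangles \( U_n \times V_n \) maintained way-below \( W_n \), with core-compactness of \( X_n \times Y_n \) starting the induction and the product interpolation lemma driving the inductive step. You merely make explicit two points the paper leaves implicit --- the pushforward of \( \ll \) along the continuous transition maps, and the final deduction \( U \times V \subset W \) from the level-wise way-below conditions --- which is a faithful filling-in rather than a different argument.
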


\begin{proof}
	Let \( W
	\subset \cup_{n < \OrdinalOmega} X_n \times Y_n \)
	be an open subset and
	write \( W_n \) for the intersection \( W \cap X_n \)
	for each \( n < \OrdinalOmega \).

	We need to show that for every \( (x, y) \in W \), one can
	find open neighbourhoods
	\( U \subset \cup_{n < \OrdinalOmega} X_n \)
	and \( V \subset \cup_{n < \OrdinalOmega} Y_n \)
	such that \( (x,y) \in U \times V \subset W \).
	For this, we shall build two ascending chains of open
	neighbourhoods such that for each \( n < \OrdinalOmega \),
	one has \( (x, y) \in U_n \times V_n \ll W_n \).

	Without loss of generality, one may assume that
	\( (x,y) \in X_0 \times Y_0\).
	Because \( X_0 \times Y_0 \) is core-compact one can find
	\( (x, y) \in U_0 \times V_0 \ll W_0 \).
	Assume that for \( n < \OrdinalOmega \), one has built
	open neighbourhoods \( U_n \subset X_n \)
	and \( V_n \subset Y_n \) with
	\( (x, y) \in U_n \times V_n \ll W_n \).
	Since \( W_n \subset W_{n+1} \),
	one has \( U_n \times V_n \ll W_{n+1} \)
	in \( X_{n+1} \times Y_{n+1} \).
	By the interpolation lemma, one can find two open neighbourhoods
	\( U_{n +1} \subset X_{n +1} \) and
	\( V_{n +1} \subset Y_{n+1} \) such that
	\( U_n \times V_n
	\subset U_{n+1} \times V_{n +1} \ll W_{n +1} \).

	By construction \( U \coloneqq \cup_{n < \OrdinalOmega} U_n \)
	and \( V \coloneqq \cup_{n < \OrdinalOmega} V_n \) are
	open neighbourhoods of the unions such that
	\( (x, y) \in U \times V \subset W \).
\end{proof}

\section{Counter-examples}

For the sake of completeness, we collect here two examples showing that
the assumptions of the theorem cannot be easily removed.

\subsection{Filtered unions}

One might want to replace the ordinal \( \OrdinalOmega \)
indexing the union by a filtered poset.
If it were possible the product of two CW-complexes would
always become a CW-complex.
Dowker showed that this is not the case by exhibiting a CW\=/complex
\( X \) with countably many cells and a CW-complex \( Y \) with
uncountably many cells such that \( X \times Y \) be not a
CW-complex
\cite{doi:10.2307/2372262}.

Further classification results have shown that for the product
of two CW\=/complexes to be a CW\=/complex, at least one of the two
must be locally countable
\cite[Thm. 1]{arXiv:1710.05296}.

\subsection{Non core-compact spaces}

One may ask for partial removal of the core-compactness assumption,
for example on one side of the product.

The following counter-example is due to Hamcke
\cite{mathstackexchange:1255678}.
Consider
a countable family of pointed circles \( C_0, \dots, C_n, \dots \).
Each finite wedge of circles \( \vee_{i \leq n} C_i \) is core-compact,
however the canonical continuous bijection
\[
	\textstyle
	\bigcup_n \Rationals \times \vee_{i \leq n} C_i
	\longrightarrow
	\Rationals \times \bigcup_n \vee_{i \leq n} C_i
\]
is not a homeomorphism, for the set \( A \) whose intersection with
\( \Rationals \times C_n \) is
\[
	A_n = A \cap (\Rationals \times C_n) \coloneqq
	\left\{ \left(r, \e^{2\ii\numberpi \theta}\right)
		\,\middle \vert\,
	\frac{\numberpi}{n} \leq r \leq \frac{\numberpi}{n}
	+ \max(\theta, 1-\theta) \right\}
\]
is closed in the finer topology but not closed in the product topology,
since \( (r=0,\theta=0) \) is a limit point.

\bibliography{ms}

\providecommand{\href}[2]{#2}\begingroup\raggedright\begin{thebibliography}{1}

\bibitem{doi:10.2307/1969609}
John Milnor, `Construction of Universal Bundles, I',
  \href{http://dx.doi.org/10.2307/1969609}{{\em The Annals of Mathematics}
  {\bfseries 63} no.~2, (Mar., 1956) 272}.

\bibitem{doi:10.1007/978-1-4684-6254-8}
Albert~T. Lundell and Stephen Weingram,
  \href{http://dx.doi.org/10.1007/978-1-4684-6254-8}{{\em The Topology of {CW}
  Complexes}}.
\newblock Springer New York, 1969.

\bibitem{mathoverflow:215576}
Yonatan Harpaz, `Answer to: Which sequential colimits commute with pullbacks in
  the category of topological spaces?'. Mathoverflow, 2015.
\newblock \url{https://mathoverflow.net/q/215576}.

\bibitem{doi:10.1017/s0305004100045850}
B.~J. Day and G.~M. Kelly, `On topological quotient maps preserved by pullbacks
  or products', \href{http://dx.doi.org/10.1017/s0305004100045850}{{\em
  Mathematical Proceedings of the Cambridge Philosophical Society} {\bfseries
  67} no.~3, (May, 1970) 553--558}.

\bibitem{doi:10.1090/s0002-9947-1978-0515540-7}
Karl~H. Hofmann and Jimmie~D. Lawson, `The spectral theory of distributive
  continuous lattices',
  \href{http://dx.doi.org/10.1090/s0002-9947-1978-0515540-7}{{\em Transactions
  of the American Mathematical Society} {\bfseries 246} (1978) 285--285}.

\bibitem{MR0070144}
John~L. Kelley, {\em General topology}.
\newblock D. Van Nostrand Company, Inc., Toronto-New York-London, 1955.

\bibitem{doi:10.2307/2372262}
C.~H. Dowker, `Topology of Metric Complexes',
  \href{http://dx.doi.org/10.2307/2372262}{{\em American Journal of
  Mathematics} {\bfseries 74} no.~3, (July, 1952) 555}.

\bibitem{arXiv:1710.05296}
Andrew~D. Brooke-Taylor, `Product of CW-complexes'.
\newblock \href{http://arxiv.org/abs/1710.05296}{{\color{arXiv}\ttfamily
  arXiv:1710.05296 [math.GN]}}.

\bibitem{mathstackexchange:1255678}
Stefan Hamcke, `Answer to: Do finite products commute with colimits in the
  category of spaces?'. Mathstackexchange, 2015.
\newblock \url{https://math.stackexchange.com/q/1255678}.

\end{thebibliography}\endgroup

\end{document}